\documentclass[10pt]{amsart}

\usepackage{latexsym}
\usepackage{amssymb}
\usepackage{amsmath}

\newtheorem{theorem}{Theorem}[section]

\newtheorem{proposition}[theorem]{Proposition}

\theoremstyle{definition}
\newtheorem{definition}[theorem]{Definition}

\theoremstyle{remark}
\newtheorem{remark}[theorem]{Remark}

\newcommand{\N}{\mathbb{N}}

\newcommand{\B}{\mathcal{B}}
\newcommand{\F}{\mathcal{F}}
\newcommand{\G}{\mathcal{G}}
\newcommand{\U}{\mathcal{U}}
\newcommand{\V}{\mathcal{V}}

\begin{document}

\title{Translation invariant filters and 
\\
van der Waerden's Theorem}

\author{Mauro Di Nasso}

\address{Dipartimento di Matematica\\
Universit\`a di Pisa, Italy}

\email{mauro.di.nasso@unipi.it}

\subjclass[2000]
{Primary 05D10; Secondary 03E05, 54D80.}

\keywords{Arithmetic progressions, Piecewise syndetic sets, 
Translation invariant filters, Algebra on $\beta\N$.}

\begin{abstract}
We present a self-contained proof of a strong version of 
van der Waerden's Theorem. By using
translation invariant filters that are maximal with respect to inclusion,
a simple inductive argument shows
the existence of ``piecewise syndetically"-many
monochromatic arithmetic progressions
of any length $k$ in every finite coloring of the natural numbers.
All the presented constructions are constructive in nature, in the sense that
the involved maximal filters are defined by recurrence on suitable 
countable algebras of sets. No use of the axiom of choice or 
of Zorn's Lemma is needed.
\end{abstract}

\maketitle

\section*{Introduction}
The importance of maximal objects in mathematics
is well-known, starting from the fundamental examples of
maximal ideals in algebra, and of ultrafilters in 
certain areas of topology and of Ramsey theory.
In this paper we focus on maximal filters on suitable 
countable algebras of sets
which are stable under translations.
By using such maximal objects, along with ultrafilters extending it,
we give a proof of a strong version of the following classical
result in Ramsey theory:

\smallskip
\noindent
\textbf{Theorem} (van der Waerden - 1927)
\emph{In every finite partition $\N=C_1\cup\ldots\cup C_r$
there exists a piece $C=C_i$ that contains
arbitrarily long arithmetic progressions,
that is, for every $k$ there exists a progression
$x+y, x+2y,\ldots,x+ky\in C$.}

\smallskip
In fact, we will prove the 
existence of ``piecewise syndetically"-many
monochromatic arithmetic progressions
of any length $k$.

Usually, van der Waerden's Theorem is proved either by double
induction using elementary, but elaborated, combinatorial arguments
in the style of the original proof \cite{vdW},
or by using properties of the smallest ideal 
$K(\beta\N,\oplus)$ in the algebra
of ultrafilters (see \cite[Ch.14]{hs}; see also
\cite{fg,bh} for stronger versions).
In our proof, for any given piecewise set,
we restrict to a suitable countable algebra
of sets, and explicitly construct by recursion a maximal translation invariant
filter, and then an ultrafilter extending it.
The desired result is finally obtained by a short proof by induction,
that is essentially a simplified version of an argument
that was used in \cite{h} in the framework of
the compact right-topological semigroup $(\beta\N,\oplus)$.
It is worth remarking that,
contrarily to the usual ultrafilter proof, we make no explicit use
of the algebra in the space of ultrafilters; in fact,
we make no use of the axiom of choice nor of
Zorn's Lemma.

\section{Preliminary notions}

$\N=\{1,2,3,\ldots\}$ denotes the set of \emph{positive integers}, 
and $\N_0=\N\cup\{0\}$ the set
of non-negative integers.
For $A\subseteq\N$ and $n\in\N_0$, 
the \emph{leftward shift} of $A$ by $n$ is the set:
$$A-n:=\{m\in\N\mid n+m\in A\}$$

Elemental notions in combinatorics of numbers that we will use
in this paper are those of thick set, syndetic set, and piecewise syndetic set.
For completeness, let us recall them here.

A set $A\subseteq\N$ is \emph{thick} if it includes arbitrarily long intervals.
Equivalently, $A$ is thick if every finite set $F=\{n_1,\ldots,n_k\}\subset\N$
has a \emph{rightward shift} included in $A$, that is, there exists $x$ such that
$$F+x:=\{n_1+x,\ldots,n_k+x\}\subseteq A.$$ 
Notice that such an $x$ can
be picked in $A$. In terms of intersections, the property
of thickness of $A$ can be rephrased by saying that
the family $\{A-n\mid n\in\N_0\}$ has the \emph{finite intersection property}
(FIP for short), that is, $\bigcap_{i=1}^k(A-n_i)\ne\emptyset$
for any $n_1,\ldots,n_k$. 

A set $A\subseteq\N$ is \emph{syndetic} if it has ``bounded gaps",
that is, there exists $k\in\N$ such that $A$ meets every
interval of length $k$. Equivalently, $A$ is syndetic if
a finite number of leftward shifts of $A$ covers all the natural numbers,
that is, $\N=\bigcup_{i=1}^k(A-n_i)$ for suitable $n_1,\ldots,n_k\in\N_0$.

A set is \emph{piecewise syndetic} if it is the intersection of a thick set with
a syndetic set. Equivalently, $A$ is piecewise syndetic
if a finite number of leftward shifts cover a thick set, that is,
$\bigcup_{i=1}^k(A-n_i)$ is thick for suitable $n_1,\ldots,n_k\in\N_0$.

Notice that the families of thick, syndetic, and piecewise syndetic sets
are all invariant with respect to shifts.
A well-known relevant property of piecewise syndetic sets that is satisfied
neither by thick sets nor by syndetic sets, is the \emph{Ramsey property} below.
For the sake of completeness, we include here a proof.

\begin{proposition}\label{psRamsey} 
In every finite partition $A=C_1\cup\ldots\cup C_r$ of a piecewise
syndetic set $A$, one of the pieces $C_i$ is piecewise syndetic.
\end{proposition}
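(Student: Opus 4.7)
The plan is to prove the proposition by induction on the number of pieces $r$, leveraging the equivalence recalled just above: $A$ is piecewise syndetic if and only if there exist $n_1,\dots,n_k\in\N_0$ with $T:=\bigcup_{i=1}^k(A-n_i)$ thick. Fix such shifts, and for any subset $X\subseteq A$ let $T_X:=\bigcup_i(X-n_i)$; then any decomposition $A=X\cup Y$ yields $T=T_X\cup T_Y$.

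The base case $r=1$ is trivial. For the inductive step, write $A=C_1\cup B$ with $B=C_2\cup\dots\cup C_r$; I will show that either $C_1$ or $B$ is piecewise syndetic, after which the inductive hypothesis applied to the partition $B=C_2\cup\dots\cup C_r$ completes the argument. If $T_{C_1}$ is thick, then $C_1$ is already piecewise syndetic (witnessed by the same shifts $n_i$) and we are done. Otherwise, pick $L$ such that no interval of length $L$ lies inside $T_{C_1}$. Now let $I\subseteq T$ be any interval: every length-$L$ sub-interval of $I$ must contain a point of $T\setminus T_{C_1}$, and since that point belongs to $T=T_{C_1}\cup T_B$ it must lie in $T_B$. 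Hence the gaps of $T_B$ inside $I$ are bounded by $L$, and because $T$ is thick such intervals $I$ may be chosen arbitrarily long, so $T_B$ itself is piecewise syndetic. Finally, if $\bigcup_j(T_B-m_j)$ is thick for some shifts $m_1,\dots,m_\ell$, then since $T_B-m_j=\bigcup_i(B-(n_i+m_j))$ the set $\bigcup_{i,j}(B-(n_i+m_j))$ is also thick, so $B$ is piecewise syndetic, as required.

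The main obstacle is the last bullet of the dichotomy: one needs to convert the non-thickness of $T_{C_1}$ into an explicit uniform gap bound $L$ that applies \emph{inside every long interval of $T$}, so that $T_B$ is genuinely syndetic within those intervals rather than merely enjoying positive upper Banach density there. This is precisely the leverage needed to promote $T_B$ to piecewise syndeticity and to let the induction close cleanly.
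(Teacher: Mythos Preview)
Your proof is correct and follows the same overall strategy as the paper: reduce by induction to a two-piece partition, then run a thickness dichotomy on the first piece. The only difference is packaging---you first pass to the thick set $T=\bigcup_i(A-n_i)$, apply the dichotomy to $T_{C_1}$ versus $T_B$, and then pull back to $B$ via an extra layer of shifts, whereas the paper works directly with $A$ and a fixed sequence of $k$-good intervals $I_n$, splitting on whether some single $h$ makes infinitely many $I_n$ $h$-good for $C_1$.
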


\begin{proof}
For simplicity, let us say that an interval $I$ is $k$-good for the set $B$
if for every sub-interval $J\subseteq I$ of length $k$ one has $J\cap B\ne\emptyset$.
By the hypothesis of piecewise syndeticity of $A$,
there exists $k\in\N$ and a sequence of
intervals $\langle I_n\mid n\in\N\rangle$ with
increasing length such that every $I_n$ is $k$-good for $A$.
It is enough to consider the case when $A=C_1\cup C_2$ is partitioned into two pieces,
because the general case $A=C_1\cup\ldots\cup C_r$ where $r\ge 2$ 
will then follow by induction.
We distinguish two cases. 

Case \# 1: There exists $h$ such that infinitely many intervals $I_n$ 
are $h$-good for $C_1$.
In this case $C_1$ is piecewise syndetic.

Case \# 2: For every $h$, there are only finitely many intervals $I_n$ that
are $h$-good for $C_1$. So, for every $h$ we can pick an interval 
$I_{n_h}$ of length $\ge h$ that is not $h$-good.
Let $J_h\subseteq I_{n_h}$ be a sub-interval of length $h$ such
that $J_h\cap C_1=\emptyset$.
The sequence of intervals $\langle J_h\mid h\in\N\rangle$ shows
that $C_2$ is piecewise syndetic.
Indeed, given $h$, for every sub-interval $J\subseteq J_h$ of length $k$
we have that $J\cap C_1\subseteq J_h\cap C_1=\emptyset$; and so
$J\cap C_2=J\cap A\ne\emptyset$, since $J\subseteq I_{n_h}$
and $I_{n_h}$ is $k$-good for $A$.
\end{proof}

\section{Maximal translation invariant filters}\label{sec:3}

In the following, by \emph{family} we mean a nonempty collection of subsets of $\N$.

\begin{definition}
A family $\G$ is \emph{translation invariant}
if $A\in\G\Rightarrow A-1\in\G$
(and hence, $A-n\in\G$ for all $n\in\N_0$).
\end{definition}

An \emph{algebra of sets} (on $\N$) is a family 
that contains $\N$ and is closed under
finite unions, finite intersections, and complements.
The [translation invariant] algebra \emph{generated} by a family $\G$ is the smallest
[translation invariant] algebra of sets that contains $\G$.

\begin{proposition}
If the family $\G$ is countable, then
one can give explicit constructions of both the (countable)
algebra generated by $\G$, and the (countable)
translation invariant algebra generated by $\G$.
\end{proposition}

\begin{proof}
Let $\langle A_n\mid n\in\N\rangle$ be an enumeration of the sets in $\G$,
and let $\langle F_n\mid n\in\N\rangle$ be an enumeration 
of the nonempty finite sets of natural numbers.\footnote
{\emph{E.g.}, if 
$n=\sum_{k=1}^\infty{a_{nk}}{2^{k-1}}$ is written
in binary expansion
where $a_{nk}\in\{0,1\}$, then we can let 
$F_n:=\{k\mid a_{nk}=1\}$.}
For $A\subseteq\N$, denote $A^{+1}=A$ and $A^{-1}=A^c$.
Then the following family $\B_\G$
is the smallest algebra of sets that contains $\G$:
$$\B_\G:=\left\{
\bigcup_{i=1}^t\Big(\bigcap_{k\in F_{n_i}}A_k^{\sigma_i(k)}\Big)\,\,\Big|\,\,
n_1,\ldots,n_t\in\N,\ \sigma_i:F_{n_i}\to\{+1,-1\}\right\}.$$
Notice that if $\G$ is translation invariant, then also $\B_\G$ is translation
invariant. So, the algebra $\B_{\G'}$ generated by 
the family of shifts $\G':=\{A-n\mid A\in\G, n\in\N_0\}$ is the smallest translation invariant
algebra containing $\G$.
\end{proof}

A \emph{filter} on an algebra of sets $\mathcal{B}$
is a nonempty family $\F\subseteq\mathcal{B}$ such that:
\begin {itemize}
\item
$\F$ is closed under finite intersections, that is, 
$A,B\in\F\Rightarrow A\cap B\in\F$;
\item
$\F$ is closed under supersets, that is, 
if $B\in\mathcal{B}$ and $B\supseteq A\in\F$ then $B\in\F$.
\end{itemize}

Every family $\G\subseteq\mathcal{B}$
with the \emph{finite intersection property} (FIP for short) 
generates a filter $\langle\G\rangle$, namely 
$$\langle\G\rangle:=
\{B\in\mathcal{B}\mid B\supseteq A_1\cap\ldots\cap A_k\ \mathrm{for\ suitable}\ 
A_1,\ldots,A_k\in\G\}.$$

An \emph{ultrafilter} $\U$ on the algebra of sets $\B$ is a filter with the additional 
property that $A\in\U$ whenever $A\in\B$ and the complement $A^c\notin\U$.
It is easily verified that a filter $\U$ is an ultrafilter if and only if 
the \emph{Ramsey property} holds:
If $A_1\cup\ldots\cup A_k\in\U$ where all sets $A_i\in\B$,
then $A_j\in\U$ for some $j$.
Ultrafilters can also be characterized as those filters
that are maximal under inclusion and so, by a straight
application of \emph{Zorn's Lemma}, 
it is proved that every filter can be extended to an ultrafilter.

The following objects are the
main ingredient in our proof of van der Waerden's Theorem.

\begin{definition}
A \emph{translation invariant filter} (TIF for short)
is a filter $\F$ on a translation invariant algebra $\B$
such that $A\in\F\Rightarrow A-1\in\F$ (and hence $A-n\in\F$ for all $n\in\N_0$).
\end{definition}

Notice that if the algebra $\B$ is translation invariant,
and the family $\G\subseteq\B$ is translation invariant,
then the generated filter $\langle\G\rangle$ is a TIF.

The notions of TIF and thick set are closely related.

\begin{proposition}\label{thickTIF}
A set $A$ is thick if and only if it belongs to a TIF $\F$.
\end{proposition}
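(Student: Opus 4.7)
The plan is to translate both directions via the characterization of thickness recalled in the preliminaries: $A$ is thick if and only if the family $\G_A := \{A - n \mid n \in \N_0\}$ has the finite intersection property.

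For the direction $(\Rightarrow)$, I will explicitly produce a TIF containing $A$. Let $\B$ be the countable algebra of sets generated by $\G_A$, which exists by the explicit construction in the footnote above. Since $A$ is thick, $\G_A$ has the FIP, so it generates a (proper) filter $\F := \langle \G_A \rangle$ on $\B$. Clearly $A = A - 0 \in \F$. To check translation invariance, I will take any $B \in \F$, so that $B \supseteq (A - n_1) \cap \cdots \cap (A - n_k)$ for suitable $n_i$. Shifting by one yields
$$B - 1 \;\supseteq\; (A - (n_1+1)) \cap \cdots \cap (A - (n_k+1)),$$
and each $A - (n_i+1)$ lies in $\G_A \subseteq \F$, so $B - 1 \in \F$.

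For the direction $(\Leftarrow)$, I will suppose $A$ belongs to a TIF $\F$ on some algebra $\B$. Translation invariance gives $A - n \in \F$ for every $n \in \N_0$, and since $\F$ is closed under finite intersections and is proper (does not contain $\emptyset$), every finite intersection $\bigcap_{i=1}^k (A - n_i)$ is a nonempty element of $\F$. Hence $\G_A$ has the FIP, so $A$ is thick.

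I do not expect a real obstacle: the proposition is essentially a reformulation of the FIP characterization of thickness. The only mildly technical point is making sure the ambient countable algebra $\B$ containing all shifts $A - n$ actually exists, which is handled by the explicit recursive construction in the footnote preceding the statement; after that, both implications reduce to unwinding the definitions.
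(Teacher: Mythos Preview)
Your proposal is correct and follows essentially the same approach as the paper: both directions reduce to the FIP characterization of thickness, and the filter $\langle \G_A\rangle$ generated by the shifts serves as the required TIF in the forward direction. You are simply more explicit than the paper, which leaves the translation-invariance check as ``easily verified'' and does not name the ambient algebra; the one point you use tacitly---that $B\in\B$ implies $B-1\in\B$ for the algebra generated by $\G_A$---is immediate since shifting by $1$ commutes with Boolean operations and sends each generator $A-n$ to $A-(n+1)$.
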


\begin{proof}
Recall that $A$ is thick if and only if the family $\G=\{A-n\mid n\in\N_0\}$
has the FIP. Since $\G$ is translation invariant,
the generated filter $\langle\G\rangle$ is a TIF that contains $A$.

Conversely, assume that $A\in\F$ for some TIF $\F$.
Then trivially the family $\G=\{A-n\mid n\in\N_0\}$ has the FIP because $\G\subseteq\F$.
\end{proof}

Similarly to ultrafilters,
by a straightforward application of \emph{Zorn's Lemma}
it can be shown that every TIF can be extended to a maximal TIF.
However, in the countable case, recursive constructions suffice
to produce both ultrafilters and maximal TIFs, which are thus obtained
in a constructive manner, without any use of the axiom of choice.

\begin{proposition}\label{maximal}
Let $\B=\{B_n\mid n\in\N\}$ be a countable algebra of sets.
\begin{enumerate}
\item
Given a family $\G\subseteq\B$ with the FIP, inductively define 
$\G_0=\G$; $\G_{n+1}=\G_n\cup\{B_n\}$
in case $B_n\cap A\ne\emptyset$ for every $A\in\G_n$; and
$\G_{n+1}=\G_n$ otherwise. Then 
$\U:=\bigcup_n\G_n$ is an ultrafilter on $\B$ that extends $\G$.
\item
Assume that the algebra $\B$ is translation invariant.
Given a 
translation invariant family $\G\subseteq\B$ with the FIP, inductively define 
$\G_0=\G$; $\G_{n+1}=\G_n\cup\{B_n-k\mid k\in\N_0\}$
in case that union has the FIP; and
$\G_{n+1}=\G_n$ otherwise. Then 
$\mathcal{M}:=\bigcup_n\G_n$ is a maximal TIF
that extends $\G$.
\end{enumerate}
\end{proposition}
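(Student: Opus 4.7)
My plan is to handle both parts by the same template: maintain the invariant that each $\G_n$ has the FIP at every stage of the recursion, take the union, and then verify the required maximality condition. The two maximality claims differ --- ``ultrafilter'' in (1), ``maximal TIF'' in (2) --- but in each case the same bookkeeping works: if a set failed to enter at its stage, then by definition a concrete finite intersection witnesses why, and that witness is exactly what the maximality argument needs.

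For part~(1), I would first show by induction on $n$ that $\G_n$ has the FIP, tacitly reading the case condition ``$B_n\cap A\ne\emptyset$ for every $A\in\G_n$'' as ranging over all finite intersections of elements of $\G_n$ (equivalently, closing $\G_n$ under intersection as we go). In the affirmative branch this is exactly the FIP-preservation condition; in the negative branch $\G_{n+1}=\G_n$ trivially keeps FIP. So $\U=\bigcup_n\G_n$ has FIP, is closed under finite intersection (any two elements lie in a common $\G_n$), and is closed under supersets in $\B$. Finally, for any $B_n\in\B\setminus\U$, the failure of the case condition at stage $n+1$ produces $A\in\G_n\subseteq\U$ with $A\cap B_n=\emptyset$, i.e.\ $A\subseteq B_n^c$, so $B_n^c\in\U$; this is the defining property of an ultrafilter.

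For part~(2), I would run the same argument but with the entire shift-orbit $\{B_n-k\mid k\in\N_0\}$ played in place of the single set $B_n$. The invariants are now (i) $\G_n$ has FIP and (ii) $\G_n$ is closed under the shift $A\mapsto A-1$. Both hold at stage $0$ by hypothesis on $\G$, and both are preserved at every step by construction; hence $\mathcal{M}=\bigcup_n\G_n$ generates a TIF. For maximality as a TIF, suppose some $B\in\B\setminus\mathcal{M}$ could be adjoined while preserving translation invariance and FIP. Writing $B=B_n$, this means exactly that $\G_n\cup\{B_n-k\mid k\in\N_0\}$ has the FIP, so the case condition at stage $n+1$ would have succeeded and we would have $B_n\in\G_{n+1}\subseteq\mathcal{M}$, a contradiction.

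The only genuinely non-routine point is the design of the recursion in~(2): to obtain a \emph{maximal} TIF one must adjoin the whole translation orbit of each candidate set in a single step, not one translate at a time, because a single $B_n-k$ can be compatible with $\G_n$ while the full orbit $\{B_n-k\mid k\in\N_0\}$ is not. Once that observation is in place, the construction parallels the classical enumerate-and-decide construction of an ultrafilter in part~(1), and the entire verification reduces to the two invariants described above.
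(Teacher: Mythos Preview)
Your plan is correct and matches the paper's approach: build an increasing chain $\G_n$ maintaining the relevant invariants (FIP in (1); FIP plus shift-closure in (2)), take the union, and use the failure of the case condition at stage $n+1$ to witness maximality. Your observation that the test in (1) must be read as ``$B_n$ meets every finite intersection from $\G_n$'' is exactly right; the paper implicitly makes the same reading when it declares the FIP of each $\G_n$ ``clear.''

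The only place where your write-up and the paper diverge is the order in which the filter axioms are verified in (1). You assert closure under supersets first and then deduce the complementation property from it; the paper does the reverse, showing directly that $A\notin\U$ and $A^c\notin\U$ cannot both hold (the two witnessing sets would have empty intersection, contradicting FIP) and only then deriving superset-closure. Your ordering is fine too, but the superset claim then needs a one-line justification you omitted: if $A\in\U$, $B=B_m\supseteq A$, and $C\in\G_m$, then $A,C\in\U$ forces $A\cap C\ne\emptyset$, hence $B_m\cap C\ne\emptyset$, so $B_m$ passes the stage-$m{+}1$ test. With that filled in, the two arguments are equivalent.
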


\begin{proof}
(1). By the definition, it is clear that all families $\G_n$ have the FIP, and 
so also their increasing union $\U$ has the FIP. 
Now assume by contradiction that $A\in\B$ is such that
both $A,A^c\notin\U$. If $A=B_n$ and $A^c=B_m$ then, by the definition
of $\U$, there exist 
$U\in\G_n$ and $U'\in\G_m$ such that
$A\cap U=A^c\cap U'=\emptyset$, and hence
$U\cap U'=\emptyset$, against the FIP of $\U$.
Finally, if $B\supseteq A$ where $B\in\B$ and $A\in\U$ then
$B\in\U$, as otherwise, by what just proved, $B^c\in\U$,
and hence $\emptyset=B^c\cap A\in\U$, a contradiction.

(2). By induction, it directly follows from the definition that all families 
$\G_n$ have the FIP and are translation invariant;
so, the same properties hold for $\mathcal{M}$.
Now let $B\supseteq A$ where $A\in\mathcal{M}$
and $B\in\B$, say $B=B_n$. Notice that $\G_n\cup\{B-k\mid k\in\N_0\}$ has the FIP
because $A-k\subseteq B-k$ for all $k$ and
$\G_n\cup\{A-k\mid k\in\N_0\}\subseteq\mathcal{M}$ has the FIP.
Then $B\in\G_{n+1}\subseteq\mathcal{M}$, and we can conclude that
$\mathcal{M}$ is a TIF.
As for the maximality, let $\mathcal{M}'\supseteq\mathcal{M}$ be a TIF. 
Given $A\in\mathcal{M}'$, pick $n$ with $A=B_n$. The family 
$\G_n\cup\{A-n\mid n\in\N_0\}$ has the FIP, since it
is included in the filter $\mathcal{M}'$, and so $A\in\G_{n+1}$.
This shows that $\mathcal{M}'\subseteq\mathcal{M}$, and hence the
two TIFs are equal.
\end{proof}

Two properties of maximal TIFs that will be relevant to our
purposes are the following.

\begin{proposition}\label{syndetic}
Let $\B$ be a translation invariant algebra, and
let $\U$ be an ultrafilter on $\B$ that includes a maximal TIF $\mathcal{M}$.
Then:
\begin{enumerate}
\item
Every $B\in\U$ is piecewise syndetic.
\item
For every $B\in\U$, the set $B_\U:=\{n\in\N\mid B-n\in\U\}$ is syndetic.\footnote
{~We remark that in general the set $B_\U$ does not belong to the algebra 
of sets $\B$.}

\end{enumerate}
\end{proposition}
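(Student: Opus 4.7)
The plan is to exploit the maximality of $\mathcal{M}$ applied not to $B$ but to its complement $B^c$. Since $B\in\U$ and $\mathcal{M}\subseteq\U$, we have $B^c\notin\U$ and in particular $B^c\notin\mathcal{M}$. Were the family $\mathcal{M}\cup\{B^c-n\mid n\in\N_0\}$ to have the FIP, it would generate a TIF strictly larger than $\mathcal{M}$, contradicting maximality. Hence the FIP fails, producing, after intersecting finitely many members of $\mathcal{M}$, a single set $M\in\mathcal{M}$ together with shifts $n_1,\ldots,n_k\in\N_0$ with
\[
M\cap(B^c-n_1)\cap\cdots\cap(B^c-n_k)=\emptyset,
\]
which is to say $M\subseteq(B-n_1)\cup\cdots\cup(B-n_k)$. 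Proposition~\ref{thickTIF} tells us that $M$ is thick, since it belongs to a TIF, so this finite union of shifts of $B$ is thick as well. This is precisely what it means for $B$ to be piecewise syndetic, proving (1).

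For (2) I plan to recycle the same witnesses $M$ and $n_1,\ldots,n_k$. Fix any $m\in\N_0$. Translation invariance of $\mathcal{M}$ gives $M-m\in\mathcal{M}\subseteq\U$, and shifting the set inclusion above by $m$ yields
\[
M-m\;\subseteq\;(B-(n_1+m))\cup\cdots\cup(B-(n_k+m)).
\]
Thus this union lies in $\U$, and the Ramsey property of the ultrafilter forces $B-(n_i+m)\in\U$ for some $i$; equivalently, $n_i+m\in B_\U$, that is, $m\in B_\U-n_i$. Hence $\N=\bigcup_{i=1}^k(B_\U-n_i)$, which is the defining condition for $B_\U$ to be syndetic.

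The main subtlety I anticipate is choosing the correct direction of the maximality argument. A naive attempt to test $\mathcal{M}\cup\{B-n\mid n\in\N_0\}$ for the FIP would, upon failure, deliver piecewise syndeticity of $B^c$ instead of $B$; one must instead start from $B^c\notin\mathcal{M}$ in order to invert the conclusion correctly. A welcome side-effect of this choice is that the same finite data $(M,n_1,\ldots,n_k)$ that witness (1) also serve verbatim in (2), with the ultrafilter Ramsey property upgrading ``the union lies in $\U$'' to ``some single shift of $B$ lies in $\U$'' at each stage $m$. Nothing in the argument requires $B_\U$ itself to lie in the algebra $\B$, since syndeticity is a purely combinatorial condition on arbitrary subsets of $\N$.
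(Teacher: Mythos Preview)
Your proof is correct and follows essentially the same route as the paper: both use the maximality of $\mathcal{M}$ against the shifted family $\{B^c-n\mid n\in\N_0\}$ to extract a single $M\in\mathcal{M}$ covered by a finite union of shifts of $B$, then invoke Proposition~\ref{thickTIF} for part~(1) and translation invariance together with the ultrafilter Ramsey property for part~(2). The only cosmetic difference is that the paper first observes that the union $\bigcup_{i=1}^k(B-n_i)$ itself lies in $\mathcal{M}$ (as a superset of $M$) and then applies thickness and translation invariance directly to that union, whereas you carry $M$ along and shift the inclusion; the two are equivalent.
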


\begin{proof}
Notice first that for every $B\in\U$ there exist $n_1,\ldots,n_k$ such that 
the union $\bigcup_{i=1}^k(B-n_i)\in\mathcal{M}$. Indeed, if
$\Lambda:=\{B^c-n\mid n\in\N_0\}$ then the union $\mathcal{M}\cup\Lambda$
does not have the FIP, as otherwise $\mathcal{M}\cup\Lambda$ would generate
a TIF that properly extends $\mathcal{M}$ (since it would contain $B^c$ 
while $B^c\notin\mathcal{M}$), against the maximality. 
So, there exist $A\in\mathcal{M}$ and
$n_1,\ldots,n_k$ such that $A\cap\bigcap_{i=1}^k(B^c-n_i)=\emptyset$.
But then $\bigcup_{i=1}^k(B-n_i)\in\mathcal{M}$, because it
is a superset of $A\in\mathcal{M}$.

(1). Pick a finite union of shifts 
$\bigcup_{i=1}^k(B-n_i)\in\mathcal{M}$. By Proposition \ref{thickTIF},
that union is thick because it is as an element of a TIF, 
and hence $B$ is piecewise syndetic.

(2). As above, pick a finite union of shifts 
$\bigcup_{i=1}^k(B-n_i)\in\mathcal{M}$.
By translation invariance, for every $m\in\N$ one has that
$\bigcup_{i=1}^k(B-n_i-m)\in\mathcal{M}\subseteq\U$
and so, by the Ramsey property of ultrafilters, there exists $i$ such that
$B-n_i-m\in\U$, that is, $m\in B_\U-n_i$.
This shows that $\N=\bigcup_{i=1}^k(B_\U-n_i)$ is a finite union of shifts of $B_\U$,
and hence $B_\U$ is syndetic.
\end{proof}

\section{A strong version of van der Waerden's Theorem}

The following property of piecewise syndetic sets 
was first proved by exploiting the properties
of ultrafilters in the smallest ideal of the
right-topological semigroup $(\beta\N,\oplus)$
(see \cite{fg,bh}).

\begin{theorem}\label{main}
Let $A$ be a piecewise syndetic set. Then for every $k\in\N$, the set 
$\mathrm{AP}_k(A):=
\{x\in A\mid \exists y\in\N\ \mathrm{s.t.}\ x+i y\in A\ \mathrm{for}\ i=1,\ldots,k\}$
is piecewise syndetic.
\end{theorem}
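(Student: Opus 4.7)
I argue by induction on $k \geq 1$. The base case $k = 1$ is immediate, as $\mathrm{AP}_1(A) = A$ for any infinite set (piecewise syndetic sets are infinite).

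For the inductive step, assume the theorem for $k$ and let $A$ be piecewise syndetic, so $B := \mathrm{AP}_k(A)$ is piecewise syndetic by the inductive hypothesis. Let $\mathcal{B}$ be the countable algebra on $\mathbb{N}$ generated by $A$, $B$, and all their leftward shifts (per the construction in the footnote). Since $B$ is piecewise syndetic, fix $m_1, \ldots, m_s$ with $\bigcup_j(B - m_j)$ thick; by Proposition~\ref{thickTIF} this union lies in a TIF on $\mathcal{B}$, which extends through Proposition~\ref{maximal}(2) and (1) to a maximal TIF $\mathcal{M}$ and an ultrafilter $\mathcal{U} \supseteq \mathcal{M}$. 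The Ramsey property of $\mathcal{U}$ yields $B - m_j \in \mathcal{U}$ for some $j$; since piecewise syndeticity and the operator $\mathrm{AP}_{k+1}$ commute with leftward shifts, I replace $A$ by $A - m_j$ to assume $B \in \mathcal{U}$, whence $A \in \mathcal{U}$.

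The target is to exhibit $y > 0$ such that $\bigcap_{i=0}^{k+1}(A - iy) \in \mathcal{U}$. Proposition~\ref{syndetic}(1) then gives this intersection piecewise syndetic, and since it is contained in $\mathrm{AP}_{k+1}(A)$, the latter is piecewise syndetic (supersets of piecewise syndetic sets being piecewise syndetic). The main tool is Proposition~\ref{syndetic}(2): applied to $B$, it shows that $B_{\mathcal{U}} = \{n : B - n \in \mathcal{U}\}$ is syndetic, so a positive $y \in B_{\mathcal{U}}$ yields $B \cap (B - y) \in \mathcal{U}$; each $x$ in this set gives two overlapping $(k+1)$-progressions witnessing $x \in B$ and $x + y \in B$, which would telescope into a $(k+2)$-progression in $A$ starting at $x$ with step $y$, provided both witnessing steps coincide with $y$.

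Forcing this step-coincidence is the main obstacle. The natural parameterization writes $B$ as the union of the \emph{slices} $S_y := \bigcap_{i=0}^{k}(A - iy)$ consisting of those $x$ witnessing a $(k+1)$-progression with step exactly $y$. To carry out the telescoping argument, I expect to enlarge $\mathcal{B}$ from the outset to also contain all slices $S_y$ and their shifts (still a countable family), and then exploit the special property, implicit in the proof of Proposition~\ref{syndetic}(2), that every element of $\mathcal{U}$ has a finite union of its shifts lying in the maximal TIF $\mathcal{M}$. A pigeonhole on those finitely many shifts, combined with the syndeticity of $B_{\mathcal{U}}$, should isolate a single $y$ for which $S_y \in \mathcal{U}$ and moreover $y \in (S_y)_{\mathcal{U}}$; one then gets $\bigcap_{i=0}^{k+1}(A - iy) = S_y \cap (S_y - y) \in \mathcal{U}$, closing the induction. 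The substantive work of the proof lies in this pigeonhole bookkeeping; everything else is already supplied by Propositions~\ref{thickTIF}, \ref{maximal}, and \ref{syndetic}.
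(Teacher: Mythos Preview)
Your proposal leaves the decisive step unproved, and the gap you flag as ``the main obstacle'' is real rather than cosmetic. You need some $y$ with $S_y\in\mathcal{U}$ and $S_y-y\in\mathcal{U}$, equivalently a $(k+2)$-term progression $0,y,\dots,(k+1)y$ in $A_{\mathcal U}$. But $B=\bigcup_{y\ge 1}S_y$ is a \emph{countable} union, so the Ramsey property of $\mathcal{U}$ hands you no individual slice; and the observation from Proposition~\ref{syndetic} that finitely many shifts of $B$ land in $\mathcal{M}$ concerns $B$, not any $S_y$. More structurally, your inductive hypothesis ``$\text{AP}_k$ preserves piecewise syndeticity'' only guarantees $(k+1)$-term progressions in piecewise syndetic sets (for instance in the syndetic set $A_{\mathcal U}$), while your step requires a $(k+2)$-term one. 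Rebuilding $\mathcal{M},\mathcal{U}$ afresh at each stage discards exactly the information that would let you lengthen the progression; there is no pigeonhole on ``finitely many shifts of $B$'' that bridges this offset.

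The paper circumvents the problem by fixing $\mathcal{M}\subseteq\mathcal{U}$ once (on the algebra generated by shifts of $A$ alone) and running the induction on a stronger auxiliary claim: \emph{whenever $B-\ell\in\mathcal{U}$, the set $B_{\mathcal U}-\ell$ contains a $k$-term arithmetic progression}. In the inductive step one uses syndeticity of $B_{\mathcal U}-\ell_0$ to fix a single finite set $F$ witnessing the bounded gaps, and then iterates: the hypothesis is applied to successive finite intersections $B_1,B_2,\ldots$ of shifts of $B$, each application producing an element $x_j\in F$. The pigeonhole is on $F$, not on shifts of $B$; once $x_n=x_m$ repeats, the accumulated data assembles into a $(k+1)$-term progression in $B_{\mathcal U}$. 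Applied with $B=A$, this yields a long progression in $A_{\mathcal U}$ directly, whence $\bigcap_{i=0}^k(A-n_j-\ell-iy)\in\mathcal{U}$ is a shift of a subset of $\text{AP}_k(A)$, and Proposition~\ref{syndetic}(1) finishes. That iteration, with the inductive hypothesis reapplied to varying $B_j$ inside a fixed $\mathcal{U}$, is the substantive content your outline is missing.
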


Notice that, as a straight consequence, one obtains the following strong version
of van der Waerden's Theorem.

\begin{theorem}
In every finite partition $\N=C_1\cup\ldots\cup C_r$
there exists a piece $C=C_i$ such that,
for every $k\in\N$, the set 
$\mathrm{AP}_k(C)$ is piecewise syndetic.
\end{theorem}

\begin{proof}
By the Ramsey property of piecewise syndetic sets 
(see Proposition \ref{psRamsey}),
we can pick a color $C_i$ which is piecewise syndetic.
\end{proof}

In this section we will give a new proof of the above theorem
which relies on the existence of an ultrafilter $\U$ on 
the appropriate translation invariant algebra $\B$,
which extends a maximal TIF
and contains a shift of $A$.

\begin{proof}[of Theorem \ref{main}]
Let $\mathcal{B}$ be the (countable)
translation invariant algebra of sets
generated by 
the translation invariant family $\{A-n\mid n\in\N_0\}$.
By the property of piecewise syndeticity, 
a finite union of shifts $T=\bigcup_{j=1}^m(A-n_j)$ is thick.
Then the translation invariant family 
$\G:=\{T-n\mid n\in\N_0\}\subseteq\B$ has the FIP,
and by Proposition \ref{maximal}
we can pick a maximal TIF $\mathcal{M}$ on $\B$ with $\mathcal{M}\supseteq\G$,
and an ultrafilter $\U$ on $\B$ with $\U\supseteq\mathcal{M}$.
The desired result is a consequence of the following general property.

\smallskip
\textbf{Claim.}
\emph{Let $\U$ be an ultrafilter that extends a maximal TIF.
If a shift $B-\ell\in\U$ for some $\ell\in\N_0$,
then $B_\U-\ell$ contains arbitrarily long arithmetic progressions.}

\smallskip
Indeed, since the finite union $T=\bigcup_{j=1}^m(A-n_j)\in\G\subseteq\U$, 
by the Ramsey property of ultrafilters there
exists $n_j$ such that $A-n_j\in\U$.
By the Claim, for every $k\in\N$ there
exist $x$ and $y$ such that $x+i y\in A_\U-n_j$ for $i=0,1,\ldots,k$.
But then $B:=\bigcap_{i=0}^k(A-n_j-x-iy)\in\U$,
and hence also the superset $\mathrm{AP}_k(A)-n_j-x\supseteq B$ belongs to $\U$,
as one can easily verify.
Now recall that all sets in $\U$ are piecewise syndetic
by Proposition \ref{syndetic}, and so we can conclude that
$\mathrm{AP}_k(A)$ is piecewise syndetic because it is a shift
of a member of $\U$.

\smallskip
We are left to prove the Claim.
We proceed by induction on $k$, and prove that if $B-\ell\in\U$ for some 
$\ell\in\N_0$, then $B_\U-\ell$ contains a $k$-term arithmetic progression.\footnote
{~This inductive construction uses a simplified version of an argument 
in \cite{h}.}

If $B-\ell\in\U$, then the set $(B-\ell)_\U=B_\U-\ell$ is syndetic by Proposition \ref{syndetic}.
In particular, $B_\U-\ell\ne\emptyset$, and this proves the induction base $k=1$.

Let us turn to the inductive step $k+1$, and assume that $B-\ell\in\U$.
Let $\ell_0=\ell$. 
By syndeticity of $B_\U-\ell_0$, there exists a finite $F\subset\N_0$ such that
for every $n\in\N$ there exists $x\in F$ with $\ell_0+n+x\in B_\U$.
For convenience, let us assume that $0\in F$.
By the inductive hypothesis, there exist $\ell_1\in\N_0$ and 
$y_1\in\N$ such that $\ell_1+i y_1\in B_\U-\ell_0$ for $i=1,\ldots,k$, that is,
$\ell_0+\ell_1+x_0+i y_1\in B_\U$ where $x_0=0\in F$.
Pick $x_1\in F$ with $\ell_0+\ell_1+x_1\in B_\U$.
If $x_1=x_0$ then we already
found a $(k+1)$-term arithmetic progression in $B_\U-\ell_0$,
as desired. Otherwise, let us consider the intersection
$$B_1:=(B-x_1)\cap\bigcap_{i=1}^k(B-x_0-i y_1).$$
Since $\ell_0+\ell_1+x_1\in B_\U$ and
$\ell_0+\ell_1+x_0+i y_1\in B_\U$ for all $i=1,\ldots,k$, the shift $B_1-\ell_0-\ell_1\in\U$
and so, by the inductive hypothesis, there exist $\ell_2\in\N_0$
and $y_2\in\N$ such that $\ell_2+i y_2\in (B_1)_\U-\ell_0-\ell_1$ for $i=1,\ldots,k$.
In consequence, $\ell_0+\ell_1+\ell_2+x_0+i(y_1+y_2)\in B_\U$
and $\ell_0+\ell_1+\ell_2+x_1+i y_2\in B_\U$ for every $i=1,\ldots,k$.
Pick $x_2\in F$ such that $\ell_0+\ell_1+\ell_2+x_2\in B_\U$.
Notice that if $x_2=x_0$ or $x_2=x_1$ then we have
a $(k+1)$-term arithmetic progression in $B_\U-\ell_0$. Otherwise, let
us consider the intersection
$$B_2:=(B-x_2)\cap\bigcap_{i=1}^k(B-x_1-iy_2)\cap
\bigcap_{i=1}^k(B-x_0-i(y_1+y_2)).$$
Similarly as above, one can easily verify that
$B_2-\ell_0-\ell_1-\ell_2\in\U$ and so, by the inductive hypothesis,
we can pick an arithmetic progression in $B_\U-\ell_0-\ell_1-\ell_2$
of length $k$. We iterate the procedure.
As the set $F$ is finite,
after finitely many steps we will find elements $x_n=x_m$ where $n>m$,
and finally obtain the following arithmetic progression of length $k+1$:
$$\ell_0+\ell_1+\ldots+\ell_n+x_n+i(y_{m+1}+\ldots+y_n)\quad
i=0,1,\ldots,k.$$
\end{proof}

\section{TIFs and left ideals in the space of ultrafilters}

The usual ultrafilter proof of van der Waerden's Theorem
(see \cite[\S 14.1]{hs})
is grounded on the existence of minimal ultrafilters, that is, 
on those ultrafilters that belong to some minimal left ideals of the
compact right-topological semigroup $(\beta\N,\oplus)$.
In this final section, we show how
(maximal) translation invariant filters are in fact related
to the closed (minimal) left ideals of $(\beta\N,\oplus)$. 
Let us recall here the involved notions.

The \emph{space $\beta\N$} is the topological space of all
ultrafilters $\U$ over the full algebra of sets $\B=\mathcal{P}(\N)$ where
a base of (cl)open sets is given by the family
$\{\mathcal{O}_A\mid A\subseteq\N\}$, with
$\mathcal{O}_A:=\{\U\in\beta\N\mid A\in\U\}$.
The space $\beta\N$ is Hausdorff and compact, and coincides with
the \emph{Stone-C\u{e}ch compactification} of the discrete space $\N$.

The \emph{pseudosum} $\U\oplus\V$ of ultrafilters $\U,\V\in\beta\N$ is defined by letting:
$$A\in\U\oplus\V\ \Longleftrightarrow\ \{n\in\N\mid A-n\in\V\}\in\U.$$
The operation $\oplus$ is associative (but not commutative),
and for every $\V$ the map $\U\mapsto\U\oplus\V$ is continuous.
This makes $(\beta\N,\oplus)$ a \emph{right-topological semigroup}.

A \emph{left ideal} $L\subseteq\beta\N$ is a nonempty set
such that $\V\in L$ implies $\U\oplus\V\in L$ for all $\U\in\beta\N$.
The notion of \emph{right ideal} is defined similarly.
Left ideals that are minimal with respect to inclusion are particularly
relevant objects, as they satisfy special properties. 
For instance, their union $K(\beta\N,\oplus)$
is shown to be the smallest bilater ideal (\emph{i.e.} it is both a left and a right ideal).
Moreover, all ultrafilters $\U$ in $K(\beta\N,\oplus)$, named \emph{minimal ultrafilters},
have the property
that every set $A\in\U$ includes arbitrarily long arithmetic progressions.\footnote
{~For all notions and basic results on the space of ultrafilters $\beta\N$
and on its algebraic structure, including properties of the smallest
ideal $K(\beta\N,\oplus)$, we refer the reader to the book \cite{hs}.}

It is well-known that there are natural correspondences 
between families with the finite intersection
property on the full algebra $\mathcal{P}(\N)$, 
and closed nonempty subsets of $\beta\N$.
Indeed, the following properties are directly verified from the definitions.

\begin{itemize}
\item
If $\G\subseteq\mathcal{P}(\N)$ is a family with the FIP 
then $\mathfrak{C}(\G):=\{\V\in\beta\N\mid\V\supseteq\G\}$
is a nonempty closed subspace.
\item
If $X\subseteq\beta\N$ is nonempty then
$\mathfrak{F}(X):=\bigcap\{\V\mid \V\in X\}$
is a filter on $\mathcal{P}(\N)$.
\item
$\mathfrak{C}(\mathfrak{F}(X))=\overline{X}$ (the topological closure of $X$)
for every nonempty $X\subseteq\beta\N$. 
\item
$\mathfrak{F}(\mathfrak{C}(\G))=\langle\G\rangle$ (the filter generated by $\G$)
for every family $\G\subseteq\mathcal{P}(\N)$ with the FIP.
\end{itemize}

\begin{proposition}
If $\F$ is a TIF on $\mathcal{P}(\N)$ then 
$\mathfrak{C}(\F)$ is a closed left ideal of $(\beta\N,\oplus)$; and conversely, 
if $L$ is a left ideal of $(\beta\N,\oplus)$ then
$\mathfrak{F}(L)$ is a TIF on $\mathcal{P}(\N)$. 
Moreover, $\mathcal{M}$ is a maximal TIF on $\mathcal{P}(\N)$
if and only if $\mathfrak{C}(\mathcal{M})$
is a minimal left ideal of $(\beta\N,\oplus)$; and
$L$ is a minimal left ideal of $(\beta\N,\oplus)$
if and only if $\mathfrak{F}(L)$ is a maximal TIF on $\mathcal{P}(\N)$.
\end{proposition}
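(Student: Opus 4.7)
The plan is to treat the four claims in sequence, starting with the elementary verifications (1) and (2), which unwind directly from the definition of the pseudosum, and then bootstrapping to the equivalences (3) and (4) via the Galois correspondence already recorded in the four bullet points preceding the proposition, with the one outside ingredient being the standard fact that minimal left ideals of $(\beta\N,\oplus)$ are automatically closed.

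For (1), suppose $\F$ is a TIF. Closedness of $\mathfrak{C}(\F)$ is among the listed bullets, so only the left-ideal property needs attention. I would fix $\V\in\mathfrak{C}(\F)$ and $\U\in\beta\N$ and show $\F\subseteq\U\oplus\V$: for any $A\in\F$, translation invariance gives $A-n\in\F\subseteq\V$ for every $n\in\N_0$, hence $\{n\in\N\mid A-n\in\V\}=\N\in\U$, so $A\in\U\oplus\V$. For (2), let $L$ be a left ideal, and denote by $\mathbf{1}$ the principal ultrafilter $\{X\subseteq\N\mid 1\in X\}$. A direct calculation from the definition of $\oplus$ shows $A\in\mathbf{1}\oplus\V \iff A-1\in\V$. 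Now if $A\in\mathfrak{F}(L)$ and $\V\in L$, then $\mathbf{1}\oplus\V\in L$ by the left-ideal property, so $A\in\mathbf{1}\oplus\V$, i.e.\ $A-1\in\V$; as this holds for every $\V\in L$, we conclude $A-1\in\mathfrak{F}(L)$.

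For (3) and (4), the key observation is that the listed identities $\mathfrak{C}(\mathfrak{F}(X))=\overline{X}$ and $\mathfrak{F}(\mathfrak{C}(\F))=\F$ make $\mathfrak{C}$ and $\mathfrak{F}$ into mutually inverse, order-reversing bijections between filters on $\mathcal{P}(\N)$ and nonempty closed subsets of $\beta\N$. By (1) and (2) this bijection restricts to one between TIFs and closed left ideals, under which maximal TIFs correspond to closed left ideals that are minimal \emph{among closed left ideals}. The bridge to honest minimality comes from the standard structural fact (see \cite[Ch.~2]{hs}) that every minimal left ideal of $(\beta\N,\oplus)$ is closed: such an $L$ contains an idempotent $e$, and then $L=\beta\N\oplus e$ is the continuous image of the compact $\beta\N$ under the right-continuous map $\U\mapsto\U\oplus e$. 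Granting this, the proof of (3)$\Rightarrow$ picks a minimal left ideal $L_0\subseteq\mathfrak{C}(\mathcal{M})$ (existence is standard in compact right-topological semigroups), deduces $\mathfrak{F}(L_0)=\mathcal{M}$ from maximality, and recovers $L_0=\overline{L_0}=\mathfrak{C}(\mathfrak{F}(L_0))=\mathfrak{C}(\mathcal{M})$; the converse direction of (3) and both directions of (4) are symmetric variants, the only slightly delicate one being (4)$\Leftarrow$, where one picks a minimal $L_0\subseteq L$, applies (4)$\Rightarrow$ to force $\mathfrak{F}(L_0)=\mathfrak{F}(L)$ by maximality, and concludes $L_0\subseteq L\subseteq\overline{L}=L_0$.

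The main obstacle I anticipate is the subtle interplay between minimality in the poset of all left ideals and minimality among closed left ideals: in a merely right-topological semigroup the closure of a left ideal need not itself be a left ideal, so one cannot naively pass to closures inside the argument. The minimal-left-ideals-are-closed fact cited above is precisely what circumvents this issue, and is the one external input the proof really depends on.
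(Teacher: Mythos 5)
Your proof is correct and follows essentially the same route as the paper: parts (1) and (2) are verified by the identical unwindings of the pseudosum (using translation invariance to get $\{n : A-n\in\V\}=\N$, and using the principal ultrafilter at $1$ to recover translation invariance of $\mathfrak{F}(L)$), and parts (3)--(4) both hinge on the order-reversing Galois correspondence $\mathfrak{C}\dashv\mathfrak{F}$ together with the two structural facts that every left ideal contains a minimal one and that minimal left ideals of $(\beta\N,\oplus)$ are closed. The differences are cosmetic: you prove (3)$\Rightarrow$ directly by locating a minimal $L_0\subseteq\mathfrak{C}(\mathcal M)$ and showing it fills up $\mathfrak{C}(\mathcal M)$, whereas the paper argues the contrapositive (if $\mathfrak{C}(\F)$ is not minimal, produce a TIF strictly above $\F$); you justify closedness of a minimal $L$ by picking an idempotent $e\in L$ and writing $L=\beta\N\oplus e$, while the paper observes more economically that $L=\beta\N\oplus\V$ for \emph{any} $\V\in L$, so no idempotent is needed; and your explicit treatment of (4)$\Leftarrow$ (pinching $L_0\subseteq L\subseteq\overline L=L_0$) fills in a step the paper leaves implicit. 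Your closing remark correctly isolates minimal-left-ideals-are-closed as the one external ingredient; note, though, that your worry about closures of left ideals failing to be left ideals is moot here, since parts (1) and (2) of this very proposition, together with $\mathfrak{C}(\mathfrak{F}(L))=\overline L$, already force $\overline L$ to be a left ideal in $(\beta\N,\oplus)$.
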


\begin{proof}
Let $\V\in\mathfrak{C}(\F)$ and let $\U\in\beta\N$ be any ultrafilter.
For every $A\in\F$, by translation invariance we know that $A-n\in\F$ for all $n$,
and so $\{n\mid A-n\in \V\}=\N\in\U$.
This shows that $A\in\U\oplus\V$. As this is true for every $A\in\F$,
we conclude that $\U\oplus\V\in\mathfrak{C}(\F)$, and so $\mathfrak{C}(\F)$
is a closed left ideal.

Now let $L$ be a left ideal, and let $A\in\mathfrak{F}(L)$ be
in the filter determined by $L$. For every $\V\in L$,
we have that $\mathfrak{U}_1\oplus\V\in L$,
where $\mathfrak{U}_1:=\{B\subseteq\N\mid 1\in B\}$ is the principal
ultrafilter generated by $1$. Then
$A\in\mathfrak{U}_1\oplus\V$, which is equivalent to $A-1\in\V$.
As this holds for every $\V\in L$, we have proved that
$A-1\in\mathfrak{F}(L)$, and so $\mathfrak{F}(L)$ is a TIF, as desired.

Let $\F$ be a TIF. If the left ideal $\mathfrak{C}(\F)$ is not minimal,
pick a minimal $L\subsetneq\mathfrak{C}(\F)$.
Then $\F\subsetneq \mathfrak{F}(L)$, and hence $\F$ is not maximal.
Indeed, $L\subseteq \mathfrak{C}(\F)\Rightarrow \mathfrak{F}(L)\supseteq
\mathfrak{F}(\mathfrak{C}(\F))=\F$;
moreover, $\F\ne\mathfrak{F}(L)$, as otherwise
$\mathfrak{C}(\F)=\mathfrak{C}(\mathfrak{F}(L))=\overline{L}=L$,
against our assumptions.
(Recall that a minimal left ideal
$L$ is necessarily closed because,
by minimality, $L=\beta\N\oplus\V:=\{\U\oplus\V\mid \U\in\beta\N\}$ 
for every given $\V\in L$, and $\beta\N\oplus\V$ is closed as it the image 
of the compact Hausdorff space $\beta\N$ under 
the continuous function $\U\mapsto\U\oplus\V$.)
In a similar way, one shows the converse implication:
If the TIF $\F$ is not maximal then the left ideal $\mathfrak{C}(\F)$ is not minimal.
In consequence, $L=\mathfrak{C}(\mathfrak{F}(L))$ is minimal
if and only if $\mathfrak{F}(L)$ is maximal, and also
the last equivalence is proved.
\end{proof}

As a straight consequence, we obtain the desired characterization.

\begin{proposition}
An ultrafilter $\U$ on $\mathcal{P}(\N)$ includes a maximal TIF if and only if
$\U$ belongs to the smallest ideal $K(\beta\N,\oplus)$.
\end{proposition}

\begin{proof}
Recall that $\U\in K(\beta\N,\oplus)$ if and only if
$\U$ belongs to some minimal left ideal.
Now let $\U\supseteq\mathcal{M}$ where $\mathcal{M}$ is
a maximal TIF. Since $\mathcal{M}=\mathfrak{F}(\mathfrak{C}(\mathcal{M}))$, 
we have that $\U\in\mathfrak{C}(\mathcal{M})$,
where $\mathfrak{C}(\mathcal{M})$ is a minimal left ideal.
Conversely, let $\U\in L$ where
$L$ is a minimal left ideal. Then $\mathfrak{F}(L)$ is a maximal TIF
and $\U\supseteq\mathfrak{F}(L)$, since
$\U\in L=\mathfrak{C}(\mathfrak{F}(L))$.
\end{proof}

\begin{remark}
One can generalize the contents of this paper
from the natural numbers to arbitrary countable semigroups $(S,\cdot)$.
Indeed, the notion of translation invariant filter also
makes sense in that more general framework.\footnote
{~Actually, our techniques also apply for uncountable semigroups,
but in that case one needs Zorn's Lemma
to prove the existence of maximal TIFs and of ultrafilters.}
Precisely, for $A\subseteq S$ and $s\in S$,
denote by $s^{-1}A:=\{t\in S\mid s\cdot t\in A\}$.
We say that an algebra $\B$ of subsets of $S$ is 
\emph{translation invariant} if $B\in\B\Rightarrow s^{-1}B\in\B$ for all $s\in S$.
Then one defines a TIF on a translation invariant algebra $\B$ 
as a filter $\F$ such that $A\in\F\Rightarrow s^{-1}A\in\F$
for all $s\in S$.
By the same arguments as the ones used in this paper,
one can prove that a reformulation of Theorem \ref{main} holds,
provided one adopts the appropriate generalization of the
notion of piecewise syndetic set.\footnote
{~In an arbitrary semigroup $(S,\cdot)$, one
defines a subset $T\subseteq S$ to be \emph{thick} if
for every finite $F$ there exists $s\in S$ with 
$F\cdot s:=\{x\cdot s\mid x\in F\}\subseteq T$;
a set $A\subseteq S$ is \emph{syndetic} if a suitable finite union 
$\bigcup_{i=1}^k s_i^{-1}A=S$ 
covers the whole semigroup; and finally a set $A\subseteq S$ is
\emph{piecewise syndetic}
if a suitable finite union $\bigcup_{i=1}^k s_i^{-1}A$ is thick
(see \cite[\S 4.4 and \S 4.5]{hs}).
}
\end{remark}

\smallskip
\textbf{Acknowledgement.}
I would like to thank the anonymous referee for carefully reading the first version of this
paper and for giving comments that were helpful for the final revision.

\end{document}